\theoremstyle{plain}
\numberwithin{equation}{section}
\newtheorem{theorem}{\bf Theorem}[section]
\newtheorem{lemma}[theorem]{\bf Lemma}
\newtheorem{cor}[theorem]{\bf Corollary}
\newtheorem{defi}[theorem]{\bf Definition}
\newtheorem{clm}[theorem]{\bf Claim}
\def\ex{\hbox{\rm ex}}
\newcommand{\eps}{\varepsilon}
\newcommand\cF{{\mathcal F}}
\newcommand\cG{{\mathcal G}}
\title{Hypergraphs without exponents}
\author{Zolt\'an F\"uredi}\thanks{%
\footnotesize{Research supported in part by the Hungarian National Research, Development and Innovation Office NKFIH grants  K116769, KH130371 and by the Simons Foundation Collaboration Grant 317487.
}}
\address{MTA R\'enyi Alfr\'ed Institute for Mathematics,
PO Box 127, H-1364 Budapest, Hungary,
and Department of Mathematics, University of Illinois at Urbana-Champaign,
IL 61801, USA.
}
\email{furedi@renyi.hu}
\author{D\'aniel Gerbner}\thanks{
\footnotesize{Research supported in part by the J\'anos Bolyai Research Fellowship of the Hungarian Academy of Sciences and the National Research, Development and Innovation Office -- NKFIH under the grants K 116769, KH 130371 and SNN 129364.
}}
\address{MTA R\'enyi Alfr\'ed Institute for Mathematics,
PO Box 127, H-1364 Budapest, Hungary.}
\email{gerbner@renyi.hu}
\date{June 14, 2019.} 
\keywords{extremal hypergraph theory, Tur\'an problem.}
\subjclass[2010]{Primary 05D05, secondary 05C65, 05C35.}
\begin{document}

\begin{abstract}
    Here we give a short, concise proof for the following result.
There exists a $k$-uniform hypergraph  $H$  (for $k\geq 5$) without
exponent, i.e., when the Tur\'an function is not polynomial in $n$.
More precisely, we have $\ex(n,H)=o(n^{k-1})$  but it exceeds
$n^{k-1-c}$ for any positive  $c$ for  $n> n_0(k,c)$.

This is an extension (and simplification) of a result of Frankl and
the first author from 1987 where the case $k=5$ was proven.
We conjecture that it is true for $k\in \{3, 4\}$ as well.
\end{abstract}

\maketitle

\section{Notation, the Tur\'an problem}

We start with some standard notation.
A $k$-{\em graph} (or $k$-{\em uniform hypergraph}) $H$ is a pair $(V, E)$ with $V=V(G)$ a set of vertices, and $E=E(G)$ a collection of $k$-sets from $V$, which are the hyperedges (or $k$-edges) of $H$.
The $s$-\emph{shadow}, $\partial_s H$, is the family of $s$-sets contained in the hyperedges of $H$. So $\partial_1 H$ is the set of non-isolated vertices, and $\partial_2 H$ is a graph.
We write $[n]$ for $\{1,2, . . . n\}$.
Given a set $A$ and an integer $k$, we write $A\choose k$ for the set of $k$-sets of $A$.
When there is no confusion, we may also use ‘edge’ for ‘$k$-edge’.
The {\em complete} $k$-graph on $n$ vertices is the $k$-graph $K_n^{(k)}= ([n],{[n]\choose k})$.
Let $I_k(i)$ denote the $k$-uniform hypergraph consisting of two hyperedges sharing exactly $i$ vertices.
The $k$-graph $H$ is $k$-{\em partite} if there exists a partition $\{ P_1, \dots, P_k\}$ of $V(H)$ such that for every edge $e\in E(H)$ and part $P_i$ we have $|e\cap P_i|=1$.
The {\em complete} $k$-partite $k$-graph $K_k(P_1, \dots, P_k)$ has all of such edges, $|E(K_k(P_1, \dots, P_k))|= |P_1|\times \dots \times |P_k|$.

Given a family of $k$-graphs $\cF$, we say that a $k$-graph $H$ is $\cF$-{\em free} if it contains no member of $\cF$ as a subgraph. We write $\ex(n,\cF)$ (or $\ex_k(n, \cF)$ if we want to emphasize $k$) for the maximum number of $k$-edges that can be present in an $n$-vertex $\cF$-free $k$-graph. The 
function $\ex(n,\cF)$ is referred to as the {\em Tur\'an number} of $\cF$. We leave out parentheses whenever it is possible, e.g., in case of $|\cF|=1$ we write $\ex(n, F)$ instead of $\ex(n, \{ F\})$.

\section{Rational exponents and non-polynomial Tur\'an functions}

Erd\H os and Simonovits (see~\cite{erdos,erdsim}) {\em conjectured} that for any rational $1\le\alpha\le 2$ there exists a graph $F$ with
\begin{equation}\label{eq21}
  \ex_2(n,F)=\Theta(n^\alpha)
\end{equation} and conversely, for every graph $F$ we have
\begin{equation}\label{eq22}\ex_2(n,F)=\Theta(n^\alpha)
   \end{equation}
for some rational $\alpha$. Bukh and Conlon~\cite{bc} showed that the first conjecture holds if we can forbid finite families of graphs. For a single graph, it is still unknown.

For hypergraphs 
 Frankl~\cite{frankl} showed that all rationals occur as exponents of $\ex_k(n,\cF)$ for some $k$ and for some finite family $\cF$ of $k$-uniform hypergrahs. Fitch~\cite{fitch} 
 showed that for a fixed $k$ all rational numbers between $1$ and $k$ occur as exponents of $\ex_k(n,\cF)$ for some family $\cF$ of $k$-uniform hypergraphs.

We say that a function $f(n): {\mathbb N}\to {\mathbb R}$ has {\em no exponent} if there is no real $\alpha$  such that $f(n)= \Theta(n^\alpha)$.
In other words, the order of magintude of $f(n)$ is not a polynomial.

Brown, Erd\H os, and S\'os \cite{bes} proposed the following problem.
Determine (or estimate) $f_k(n, v, e)$, i.e.,  the maximum number of edges in a $k$-uniform, $n$-vertex hypergraph in which no $v$ vertices span $e$ or more edges. This is a Tur\'an type problem: Let $\cG_k(v, e)$ be the family of $k$-graphs, each member having $e$ edges and at most $v$ vertices, then $f_k(n,v,e)= \ex_k(n, \cG_k(v,e))$.

Ruzsa and Szemer\'edi~\cite{rsz} showed that if a 3-uniform hypergraph does not contain three hyperedges on six vertices, then it has $o(n^2)$ edges, and they also gave a construction with $n^{2-o(1)}$ hyperedges.
The assumption on the hypergraph is equivalent to forbidding the following two sub-hypergraphs $\{123,124\}$ (a pair covered twice) and $\{123,345,561\}$ (a linear triangle). They proved
\begin{multline}\label{eq23}
n^{2-o(1)}< \frac{1}{10}n r_3(n) < f_3(n,6,3)-(n/2) \\ \leq \ex_3(n, \left\{ \{123,124\}, \{123,345,561\}\right\} )
   \leq f_3(n,6,3) = o(n^2).
\end{multline}
(For the definition of $r_3(n)$, see the paragraph containing~\eqref{eq53} in Section~\ref{sec:tools}).
Thus they found a family of two hypergraphs such that not only its Tur\'an number does not have a rational exponent, it does not have an exponent at all.
This is the famous $(6,3)$-theorem, $f_3(n,6,3)$ is non-polynomial.

Erd\H os, Frankl, and R\"odl~\cite{efr} extended this to every $k$ proving
 $f_k(n, 3k-3, 3)= o(n^2)$ but $\lim _{n\to \infty} f_k(n, 3k-3, 3)/n^{2-\eps} = \infty$ for all $\eps>0$ ($k\geq 3$ and $\eps$ are fixed, $n \to \infty)$.
The proof of the upper bound here and in~\eqref{eq23} are based only on Szemer\'edi's regularity lemma~\cite{szem2}.

\section{Single hypergraphs with no exponents}

Answering a question of Erd\H os, a single 5-uniform hypergraph with no exponent was presented in~\cite{ff87}:

\begin{theorem}[Frankl and F\"uredi~\cite{ff87}]\label{frfu}
Let $H=\{12346,12457,12358\}$. Then $\ex_5(n,H)=o(n^4)$ but $\ex_5(n,H)\neq O(n^{4-\varepsilon})$ for any $\varepsilon>0$.
\end{theorem}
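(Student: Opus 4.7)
For the \textbf{lower bound}, I would adapt the Ruzsa-Szemer\'edi construction. Let $R$ be a linear $(6,3)$-free 3-graph on a vertex set $A$ of size $n$ with $|R| \geq n^{2-o(1)}$ edges, as given by (2.3). Take a set $B$ disjoint from $A$ with $|B|=n$, and form the 5-uniform hypergraph
\[
\mathcal{H} \;:=\; \bigl\{\, T\cup P : T\in R,\ P\in \tbinom{B}{2} \,\bigr\}
\]
on $V := A\cup B$, so that $|\mathcal{H}| = |R|\binom{n}{2} = n^{4-o(1)}$. To verify $H$-freeness I would argue by contradiction: assume $e_1,e_2,e_3\in\mathcal{H}$ form a copy of $H$, write $e_i=T_i\cup P_i$ with $T_i\in R$ and $P_i\in\binom{B}{2}$, and let $\{x,y\}:=e_1\cap e_2\cap e_3$. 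Case-splitting on $|\{x,y\}\cap A|$: if $\{x,y\}\subseteq B$ then $P_1=P_2=P_3=\{x,y\}$ and the three $T_i$'s pairwise meet in one vertex with empty triple intersection, yielding a linear triangle on 6 vertices in $R$, contradicting $(6,3)$-freeness; if $\{x,y\}\subseteq A$ then the linearity of $R$ forces $T_1=T_2=T_3$, hence $|e_1\cap e_2\cap e_3|\geq 3$, a contradiction; and the mixed case $|\{x,y\}\cap A|=1$ is ruled out similarly using $|e_i\cap e_j|=3$ and linearity.

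For the \textbf{upper bound}, the key structural observation is that the three edges of $H$ share the pair $\{1,2\}$ and that the link of $H$ at this pair, namely the 3-graph $\{\{3,4,6\},\{4,5,7\},\{3,5,8\}\}$, is exactly a linear triangle on 6 distinct vertices. Consequently $G$ is $H$-free if and only if for every pair $p\subseteq V(G)$ the 3-uniform link $G_p=\{T:T\cup p\in E(G)\}$ contains no linear triangle.

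My plan is to pass to a maximum linear sub-hypergraph $L_p \subseteq G_p$. A short inclusion-exclusion argument shows that in any linear 3-graph every $(6,3)$-configuration is a linear triangle on six vertices; hence $L_p$ is $(6,3)$-free and $|L_p|\leq f_3(n,6,3)=o(n^2)$ by (2.3). By maximality, every edge of $G_p\setminus L_p$ shares a pair with some edge of $L_p$, which gives $|G_p|\leq 4\,\Delta_2(G_p)\,|L_p|$, where $\Delta_2(G_p)$ is the maximum pair codegree in $G_p$ --- equivalently, the maximum codegree among 4-sets of $G$ containing $p$. Summing $\sum_p |G_p|=10|G|$ over all pairs yields
\[
|G|\;\leq\; C\,\Delta_4(G)\cdot n^2\cdot f_3(n,6,3)\;=\;\Delta_4(G)\cdot o(n^4),
\]
where $\Delta_4(G)$ is the maximum 4-codegree of $G$. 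This finishes the argument when $\Delta_4(G)=O(1)$.

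The \textbf{main obstacle} is the case in which $\Delta_4(G)$ grows with $n$, as the above bound then becomes too weak. I would handle this by an iterative cleanup: peel off edges through 4-sets of codegree exceeding a slowly-growing threshold $g(n)\to\infty$ in successive rounds, losing at most $o(n^4)$ edges per round while driving the maximum 4-codegree down. Choosing $g(n)$ so that after $O(\log n)$ rounds the maximum 4-codegree is bounded while the total edge loss remains $o(n^4)$ is the delicate technical step, and this is likely where the original 1987 paper of Frankl and F\"uredi deploys a more refined codegree-counting lemma rather than the naive Markov bound.
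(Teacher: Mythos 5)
Your lower bound is correct and is essentially the paper's construction from Section~\ref{sec:10} specialized to $k=5$, $r=3$: your linear $(6,3)$-free $3$-graph $R$ is precisely a $\{Q_3(3), I_3(2)\}$-free $3$-graph, and blowing it up by all pairs from a fresh vertex set is exactly what the paper does. The case analysis you sketch for $H$-freeness is the right one.

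The upper bound, however, has a genuine gap, and your closing paragraph already senses it. Your structural observation --- that $G$ is $H$-free iff every pair-link $G_p$ is linear-triangle-free --- is correct but not strong enough, because $G_p$ need not be \emph{linear}, so the $(6,3)$-theorem does not apply directly. Passing to a maximum linear subhypergraph $L_p$ and charging the rest to pair-codegrees yields $|G| \le \Delta_4(G)\cdot o(n^4)$, which is vacuous: an $H$-free $G$ with $\Theta(n^4)$ edges can easily have $4$-sets of codegree $\Theta(n)$, and no peeling scheme obviously drives $\Delta_4$ down to $O(1)$ while losing only $o(n^4)$ edges (your ``iterative cleanup'' is a hope, not an argument). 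This is where the proof needs a different idea, and the paper's is not a refined codegree count. The paper first passes to a $5$-partite subhypergraph via Erd\H os--Kleitman, then assigns to each edge $e$ the set $D(e)\subseteq[5]$ of parts in which $e$ can be ``flipped'' to another edge; $Q_5(3)$-freeness forces $|D(e)|\le 2$. After pigeonholing on the value of $D$, one looks at the link $H'[T]$ of a transversal $T$ of the $D$-parts: this link is $\ell$-uniform with $\ell = 5-|D|\ge 3$, it is $I_\ell(\ell-1)$-free \emph{by construction} (a flip in a non-$D$ part is forbidden for edges with $D(e)=D$), and it is $Q_\ell(\ell)$-free (else attaching $T$ gives a $Q_5(\ell)\supseteq Q_5(3)$). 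So each link is both linear and linear-triangle-free, Corollary~\ref{co:fr2} (which for $\ell=3$ is the $(6,3)$-theorem and in general follows from the hypergraph removal lemma) gives $|H'[T]| = o(n^{\ell-1})$, and summing over the $O(n^{|D|})$ transversals gives $o(n^4)$ with no codegree hypothesis at all. The partition-plus-pigeonhole on $D$ is exactly the device that supplies the linearity your approach lacks.
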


One aim of this paper is to give a short proof for this result.
The original proof heavily relied on the delta-system method, we can get rid of that.
We also extend it for all $k\geq 5$.
We {\em conjecture} that examples with no exponents should exist for $k=3$ and $4$, too.


\begin{defi}\label{qkr}
Let us consider three disjoint sets of vertices $A=\{a_1,\dots,a_{k-r}\}$, $B=\{b_1,\dots, b_r\}$ and $C=\{c_1,\dots, c_r\}$. Let $Q_k(r)$ denote the $k$-uniform hypergraph consisting of all the hyperedges of the form $A\cup \left(B\setminus \{b_i\}\right) \cup \{c_i\}$, for $1\le i \le r$.
\end{defi}

So $|E(Q_k(r)|=r$ and $|V(Q_k(r)|=k+r$.
To avoid trivialities we suppose that $r\geq 2$ since $Q_k(0)$ is an empty hypergraph and $Q_k(1)$ has only one hyperedge.
In this paper we study $\ex_k(n,Q_k(r))$ for every pair of values $k$ and $r$, $k\geq r\geq 2$, and we either determine the order of magnitude or show that there is no exponent.

Note that $Q_k(2)=I_k(k-2)$ (two $k$-edges meeting in $k-2$ elements). The study of the Tur\'an number of $I_k(i)$ has been initiated by Erd\H os~\cite{erdos}. Frankl and F\"uredi~\cite{ff85} proved that $\ex_k(n, I_k(i))= \Theta(n^{\max \{i, k-i-1\}})$ for $0\leq i\leq k-1$.
This gives $\ex_k(n,Q_k(2))=\Theta(n^{k-2})$ for $k\geq 3$ and $\ex_2(n, Q_2(2))= \Theta(n)$.

Our main result is the following theorem.

\begin{theorem}\label{main} If $k\geq r\ge 3$ and $r\geq (k/2)+1$, then $\ex_k(n,Q_k(r))=\Theta(n^{k-1})$.

If $k\geq r\ge 3$ and $r\leq (k+1)/2$, then $\ex_k(n,Q_k(r))=o(n^{k-1})$ but $\ex_k(n,Q_k(r))\neq O(n^{k-1-\varepsilon})$ for any $\varepsilon>0$.
\end{theorem}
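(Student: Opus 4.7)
My plan is to handle the two regimes of $r$ separately, tied together by a common link-averaging reduction. For any $(k-r)$-set $S\subseteq V(H)$, write $L_S=\{e\setminus S:S\subseteq e\in E(H)\}$ for its $r$-graph link; the identity $\sum_{|S|=k-r}|L_S|=\binom{k}{k-r}|E(H)|$ shows that a copy of $Q_{r'}(r)\subseteq L_S$ with $r'=k-|S|$ extends, by appending $S$, to a copy of $Q_k(r)\subseteq H$. Hence to prove the upper bound it suffices to bound $\ex_{k_0}(n,Q_{k_0}(r))$ at a well-chosen base uniformity $k_0\le k$, and for the lower bound it suffices to arrange every $(k-k_0)$-link to be a near-extremal $Q_{k_0}(r)$-free $k_0$-graph.

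For the polynomial regime $r\ge k/2+1$ the natural base is $k_0=r$. The diagonal upper bound $\ex_r(n,Q_r(r))=O(n^{r-1})$ I would prove by an induction on $r$: if an $r$-graph has more than $C_r n^{r-1}$ edges then some vertex has link of size $>C_{r-1}n^{r-2}$, which by induction contains $Q_{r-1}(r-1)$; appending the chosen vertex gives $r-1$ edges of a potential $Q_r(r)$, and a density/pigeon argument (absorbing only $O(1)$ already-used vertices) supplies the missing $r$-th edge. The base case $r=3$ is the $\Theta(n^2)$ Tur\'an number of the loose triangle. For the lower bound I would start at $k_0=r$ from the ``star'' $\{e\in\binom{[n]}{r}:v_0\in e\}$, which is $Q_r(r)$-free because every $Q_r(r)$ has empty common intersection, and then extend to $k>r$ via the link framework above. $Q_k(r)$-freeness reduces to link-freeness because the hypothesis $2r>k$ forces the anchor of any embedded $Q_k(r)$ to coincide with one of the designated link-centres.

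For the no-exponent regime $r\le (k+1)/2$ the natural base is $k_0=2r-1$, the smallest uniformity still in this range. For the lower bound I would construct a $(2r-1)$-uniform generalisation $R_0$ of the Ruzsa--Szemer\'edi hypergraph from~\eqref{eq23}, on $m$ vertices with $|R_0|\ge m^{2r-2-o(1)}$ edges and $Q_{2r-1}(r)$-free, by encoding Behrend's $3$-AP-free set. For $k>2r-1$ the link framework then lifts $R_0$ to a $Q_k(r)$-free $k$-graph on $n$ vertices with $n^{k-1-o(1)}$ edges: arrange every $(k-2r+1)$-link of $H$ to be a copy of such an $R_0$, giving approximately $\binom{n}{k-2r+1}\cdot m^{2r-2-o(1)}/\binom{k}{k-2r+1}=n^{k-1-o(1)}$ edges overall; by the identity above any $Q_k(r)\subseteq H$ would force a copy of $Q_{2r-1}(r)$ in some such link, contradicting the $Q_{2r-1}(r)$-freeness of $R_0$.

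The main obstacle is the base upper bound $\ex_{2r-1}(n,Q_{2r-1}(r))=o(n^{2r-2})$; via link averaging this immediately implies $\ex_k(n,Q_k(r))=o(n^{k-1})$ for all $k\ge 2r-1$. The original proof of the case $r=3$, $k=5$ in~\cite{ff87} relied on the delta-system method, and the announced simplification needs a hypergraph regularity / counting-lemma argument that, given $\ge \varepsilon n^{2r-2}$ edges in a $(2r-1)$-graph, extracts the swap structure $Q_{2r-1}(r)$ from a dense regular $\Theta(r)$-vertex block---the exact analogue of the $(6,3)$-theorem~\eqref{eq23}. Making this regularity argument run uniformly in $r\ge 3$, rather than exploiting the special combinatorics at $r=3$, is the principal technical hurdle.
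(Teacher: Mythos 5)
Your link-averaging reduction is sound and is in fact close in spirit to what the paper does (the paper works with $\ell$-graph links $H'[T]$ inside a fixed $k$-partition), but it leaves the hard step exactly where you say it does, and the paper's way around that obstacle is precisely the idea missing from your sketch.

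The gap you flag as the ``main obstacle'' is real and is not overcome by regularity alone. Bounding $\ex_{2r-1}(n,Q_{2r-1}(r))$ by $o(n^{2r-2})$ for a \emph{single} forbidden $(2r-1)$-graph is the difficult Tur\'an problem here, and the removal-lemma argument does not close when the only forbidden structure is $Q_{\ell}(\ell)$ (or $Q_{\ell}(r)$): one needs the shadow of the link to be a disjoint union of cliques, and that requires forbidding $I_{\ell}(\ell-1)$ as well. The paper's trick is to first pass to a $k$-partite subgraph (Erd\H os--Kleitman), then for each edge $e$ record the set $D(e)\subset[k]$ of parts in which $e$ can be ``swapped'' to another edge; $Q_k(r)$-freeness forces $|D(e)|<r$, and after a $2^k$-fold pigeonhole one may assume all edges have the same $D$, with $\ell:=k-|D|\ge k-r+1$. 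Crucially, for any transversal $T$ of the parts in $D$, the link $H'[T]$ is then \emph{automatically} $I_{\ell}(\ell-1)$-free (a swap outside $D$ would contradict $D(e)=D$), and when $\ell\ge r$ it is also $Q_{\ell}(\ell)$-free. So the base case is not a single-graph Tur\'an number but the Frankl--R\"odl pair $\{Q_\ell(\ell),I_\ell(\ell-1)\}$, whose $o(n^{\ell-1})$ bound (Corollary~\ref{co:fr2}) has the short removal-lemma proof included in Section~\ref{sec:FR}. This is the step you would need to add, and it replaces your entire ``principal technical hurdle'' paragraph.

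Two smaller issues. For the polynomial lower bound (\ref{main}.b), the star $\{e\in\binom{[n]}{r}:v_0\in e\}$ is indeed $Q_r(r)$-free, but there is no analogous star that works as a $k$-graph for $k>r$: the full star $\{e\in\binom{[n]}{k}:v_0\in e\}$ \emph{does} contain $Q_k(r)$ (put $v_0$ into the anchor $A$, which is nonempty when $k>r$), and ``arranging every $(k-r)$-link to be a near-extremal star'' is a design problem you have not solved; your claim that $2r>k$ forces the anchor to coincide with a designated centre is not justified. The paper instead takes the product of an $(r-1)$-uniform packing on one half of the vertices with the complete $(k-r+1)$-graph on the other half, and the $Q_k(r)$-freeness is a short symmetric-difference argument using $k\le 2r-2$. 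For the no-exponent lower bound, your Behrend-type plan is in the right direction, but you have overlooked the monotonicity $Q_k(3)\subset\cdots\subset Q_k(k)$: it suffices to construct a dense $Q_k(3)$-free $k$-graph, which the paper does algebraically with a $k$-good (Behrend-like) set and two modular constraints; no lifting over links is needed.
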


Note that $Q_5(3)=\{12346,12457,12358\}$, so this Theorem is indeed an extension of Theorem~\ref{frfu}.
Since $Q_k(3)\subset \dots \subset Q_k(k)$, we obviously have
\[
\ex_k(n,Q_k(3) ) \leq \ex_k(n,Q_k(4) )\leq \dots \leq \ex_k(n,Q_k(k) ).
\]
So to prove Theorem~\ref{main} we need to show that for $k\geq r\geq 3$ as $n\to \infty$ we have  \newline${}$\quad\quad(\ref{main}.a)\quad $\ex_k(n,Q_k(k))= O(n^{k-1})$,
\newline${}$\quad\quad(\ref{main}.b)\quad $\ex_k(n,Q_k(r))= \Omega(n^{k-1})$ if $k\leq 2r-2$,
\newline${}$\quad\quad(\ref{main}.c)\quad  $\ex_k(n,Q_k(r))= o(n^{k-1})$ if $k\geq 2r-1$,
\newline${}$\quad\quad(\ref{main}.d)\quad $\ex_k(n,Q_k(3))= \Omega(n^{k-1-\eps})$ if $k\geq 5$, $\forall \eps>0$ fixed. \newline
We emphasize that to prove that $Q_k(3)$ has no exponent (for $k\geq 5$), we only use
the hypergraph removal lemma (Lemma \ref{le51}) and our lower bound construction from Section~\ref{sec:constr}.

\medskip
\noindent
{\em Problem.}\enskip {\em Determine} $\limsup_{n\to \infty} \ex_k(n,Q_k(r))/n^{k-1}$ {\em for} $4\leq k\leq 2r-2$.
\medskip

The rest of the paper is organized as follows.
In Section~\ref{sec:principal} we discuss a strongly related problem, in  Sections~\ref{sec:tools} and~\ref{sec:FR} the necessary tools are presented,  Section~\ref{sec:upper} contains the proof of the upper bounds (\ref{main}.a) and (\ref{main}.c),  Section~\ref{sec:constr1} is a simple construction to establish the lower bound (\ref{main}.b), and
 our most interesting construction for the lower bound (\ref{main}.d) is presented in Section~\ref{sec:constr}.
Finally, a simple proof for (\ref{main}.d) is presented in Section~\ref{sec:10} for the special case $k=2r-1$.

\section{Principal families}\label{sec:principal}

An easy averaging argument shows that $\ex(n,\cF)/{n \choose k}$is nonincreasing and hence tends to a limit as $n\to \infty$.  This limit, denoted by $\pi(\cF)$, is the {\em Tur\'an density} of $\cF$.
The Tur\'an (density) problem for $k$-graphs is this: given a family $\cF$, determine $\pi(\cF)$.
This question for $2$-graphs, i,e., for a family of ordinary graphs $\cG$,  has been completely answered by the Erd\H os-Stone-Simonovits Theorem, which states $\pi(\cG)= (m -2)/(m-1)$, where $m$ is the smallest chromatic number of graphs in $\cG$.
Hence
\begin{equation} \label{eq41}
\pi(\cG)= \min_{G\in \cG}\pi(G).
  \end{equation}
Thus Tur\'an density is {\em principal} among ordinary graphs.

  By contrast very few Tur\'an densities of $k$-graphs are known (although Pikhurko~\cite{pik} gave infinitely many values).
Nonprincipality for $3$-graphs was conjectured by
Mubayi and R\"odl~\cite{mr}, and first exhibited by Balogh~\cite{ba}.
Mubayi and Pikhurko~\cite{mp} gave the first example of a {\em nonprincipal pair} of $3$-graphs, i.e.
a pair $F, F'$ with $\pi(F, F') < \min\{ \pi (F), \pi (F')\} $.
The simplest pair is due to Falgas-Ravry and Vaughan~\cite{ZF_VFR}, who proved $\pi(K_4^-, F_{3,2})=5/18=0.2777\dots$,
 where $E(K_4^-)= \{123, 124, 134\}$ and $E(F_{3,2})=\{123, 124, 125, 345\}$. On the other hand
there is a lower bound $\pi(K_4^-)\geq 2/7$ from~\cite{ff84}, and in~\cite{fps} it was proved that $\pi(F_{3,2})=4/9$.

Equation~\eqref{eq41} implies that, in case of ordinary graphs, if  $\min_{G\in \cG} \chi(G)>2$ then always exists a $G\in \cG$ such that
\begin{equation} \label{eq42}
  \ex(n, \cG)= (1+o(1)) \ex(n, G).
  \end{equation}
When bipartite graphs are involved then such a strong principality does not hold.
Erd\H os and Simonovits~\cite{erdsim} proved that
 $\ex(n, \{C_4, C_5\})=(1+o(1))(n/2)^{3/2}$, on the other hand we have
 $\ex(n, C_4)=(\frac{1}{2}+o(1))n^{3/2}$ and $\ex(n, C_5)= \lfloor n^2/4\rfloor$ (for $n\geq 6$).
So, instead of~\eqref{eq42}, Erd\H os and Simonovits~\cite{erdsim} made the following {\em compactness conjecture}
(in fact, we can call it {\em weak} principality), that any finite family $\cG$ of graphs (with $\ex(n,\cG)\neq O(1)$)  contains a single graph $G$ such that
\begin{equation} \label{eq43}
\ex_2(n,\cG)=\Theta(\ex_2(n,G)).
 \end{equation}
This conjecture with the result of Bukh and Conlon (mentioned after~\eqref{eq22}) would imply conjecture~\eqref{eq21}.

The upper bound in the Ruzsa-Szemer\'edi $(6,3)$-theorem (i.e., $\ex_3(n, \{ I_3(2), Q_3(3) \})=o(n^2)$, see~\eqref{eq23}) shows that there is no compactness for hypergraphs.
Indeed, the Tur\'an number of $I_3(2)$ is $n(n-1)/6+O(n)$  (Steiner triple systems are extremal) and $\ex(n, Q_3(3))\geq \binom{n-1}{2}$ (because the centered family $\{ f: f\in \binom{[n]}{3}, 1\in f \}$ 
 does not contain linear triangles). Actually, it is known~\cite{ff87} that  $\ex(n, Q_3(3))= \binom{n-1}{2}$ for $n> n_0$, so both of these hypergraphs have quadratic Tur\'an numbers.

\section{Lemmas and tools}\label{sec:tools}

The following observation, due to Erd\H os and Kleitman, is one of the basic tools to determine the order of magnitude of the size of a $k$-graph $H$: Every $k$-graph $H$ has a
 $k$-partition of its vertices $V(H)= P_1\cup \dots \cup P_k$ into almost equal parts $\left(\bigl\lvert|P_i|-|P_j|\bigl\rvert\leq 1\right)$ such that
 for the $k$-partite subhypergraph $H'$ with $E(H'):= E(H) \cap E(K_k(P_1, \dots ,P_k))$, one has
 \begin{equation}\label{eq51}
  \frac{k!}{k^k}|E(H)|\leq  |E(H')|\leq |E(H)|.
 \end{equation}

Suppose $n\geq r\geq t\geq 1$ are integers. An $r$-graph $H$ on $n$ vertices is called an $(n,r,t)$-{\em packing} if $|e\cap e'|<t$ holds for every $e,e'\in E(H)$, $e\neq e'$. The maximum of $|E(H)|$ of such packings is denoted by $P(n,r,t)$.
Since then $\binom{n}{t}\geq |\partial_t H|=\binom{r}{t}|E(H)|$, we have $P(n,k,t)\leq \binom{n}{t}/\binom{r}{t}$.
It is known that $P(n,r,t)=(1+o(1))\binom{n}{t}/\binom{r}{t}$ when $r$ and $t$ are fixed and $n$ tends to infinity.
(Even {\em perfect} packings, i.e., Steiner systems $S(n,r,t)$'s, exist if some divisibility constraints hold and $n$ is sufficiently large.) We only use the following easy statement: If $r$ is fixed and $n\to\infty$ then
 \begin{equation}\label{eq52}
   P(n,r,t)\geq \binom{n}{t}/ \binom{r}{t}^2 = \Omega(n^t).
 \end{equation}

Let $k$ and $n$ be positive integers.
A set of numbers $A$ is called ${\rm AP}_k$-{\em free} if it does not contain $k$  distinct elements forming an arithmetic progression of length $k$. As usual, let $r_k(n)$ denote the maximum size of an ${\rm AP}_k$-free sequence $A\subseteq [n]$.
The celebrated Szemer\'edi's theorem~\cite{szem} states that for a fixed $k$ as $n\to \infty$ we have
 \begin{equation}\label{eq53}
    r_k(n)=o(n).
 \end{equation}
(The case $r_3(n)=o(n)$ was proved much earlier by K. F. Roth).

Let $k$ be an integer and $p$ be a prime, $p>k$. We say that $S\subseteq \{0,\dots,p-1\}$ is $k$-\emph{good} if for any $m_1,m_2,m_3\in \{-k,-k+1,\dots,-1\}\cup\{1,\dots,k\}$ and $s_1,s_2,s_3\in S$
the following equations hold:
\begin{equation*}
\left.\begin{array}{l}
    m_1+m_2+m_3=0 \,\,\,\,\,\text{and}\\[4pt]
    m_1s_1+m_2s_2+m_3s_3=0\end{array}\right\}
  \quad {\rm imply }\quad s_1=s_2=s_3.
\end{equation*}
Here addition and multiplication are taken modulo $p$. Let $s_k(p)$ denote the size of the largest $k$-good set. The following result is an easy extension of Behrend's construction, see, e.g., Ruzsa~\cite{ruzsa1, ruzsa2}:
There is a $c_k>0$ such that
\[  p \exp[ - c_k \sqrt{\log p}] < s_k(p).
\]
We only need that if $k$ and $\eps>0$ are fixed and $p\rightarrow\infty$, then
\begin{equation}\label{eq55}
  s_k(p)> p^{1-\eps}
\end{equation}
Note that a $k$-good set cannot contain a (strictly increasing) arithmetic progression of length $3$, so $s_k(p)\leq r_3(p)$ and $r_3(p)=o(p)$ by Roth's theorem, see~\eqref{eq53}.

We will also use the so-called hypergraph removal lemma. It (together with other versions of hypergraph regularity) was developed by several groups of researchers, see~\cite{gowers,nrs,rs,rs2,tao}.

\begin{lemma}[Hypergraph Removal Lemma]\label{le51} For any $\varepsilon>0$ and integers $\ell\ge k$, there exist $\delta>0$ and an integer $n_0$ such that the following statement holds. Suppose $F$ is a $k$-uniform hypergraph on $\ell$ vertices and $H$ is a $k$-uniform hypergraph on $n\ge n_0$ vertices, such that $H$ contains at most $\delta\binom{n}{\ell}$ copies of $F$. Then one can delete at most $\varepsilon\binom{n}{k}$ hyperedges from $H$ such that the resulting hypergraph is $F$-free.

\end{lemma}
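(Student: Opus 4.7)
The plan is to deduce the lemma from a hypergraph regularity lemma together with a matching counting lemma, in exactly the way Ruzsa and Szemerédi deduced the graph removal lemma from Szemerédi's regularity lemma. Fix $\varepsilon > 0$ and the pair $F, \ell$; regard $k$ as fixed. First I would apply a strong form of hypergraph regularity to $H$: this produces, for chosen auxiliary parameters $\eta > 0$ and $\varepsilon_0 > 0$, an equitable vertex partition $V(H) = V_1 \cup \dots \cup V_t$ with $t = t(\varepsilon_0, \eta, k)$ bounded, together with a hierarchy of partitions of the $j$-sets across each $k$-tuple of parts for $2 \le j \le k-1$, such that on most $k$-tuples of parts the edge set $E(H)$ is $\varepsilon_0$-regular with respect to the top-level polyads and carries a well-defined density.

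Next I would perform a cleaning step, deleting every hyperedge of $H$ that falls into at least one of three categories: (i) it meets a leftover class from the partition; (ii) the $k$-tuple of parts it spans carries density below $\eta$; or (iii) it sits on a polyad that is not $\varepsilon_0$-regular. Standard double counting bounds the total number of deleted edges by something like $O(\varepsilon_0 + \eta) \binom{n}{k}$, so by choosing $\varepsilon_0 \ll \eta \ll \varepsilon$ this loss is at most $\varepsilon \binom{n}{k}$. Call the resulting hypergraph $H'$.

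I claim $H'$ is $F$-free once $\delta$ is chosen small enough. Suppose for contradiction that a copy of $F$ survives in $H'$. Its $\ell$ vertices then lie in parts $V_{i_1}, \dots, V_{i_\ell}$, and every one of its hyperedges sits on a regular polyad of density at least $\eta$. The hypergraph counting lemma for regular partitions then outputs at least $c(\eta, F) \, n^\ell$ distinct copies of $F$ on the same parts and polyad structure, where $c(\eta, F) > 0$ depends only on $\eta$ and $F$. Choosing $\delta < c(\eta, F)/(2\,\ell!)$ forces $H \supseteq H'$ to contain strictly more than $\delta \binom{n}{\ell}$ copies of $F$, contradicting the hypothesis. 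Hence $H'$ is $F$-free, as required.

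The main obstacle is not the deduction above, which once the two black boxes are in place is essentially formal bookkeeping, but proving the hypergraph regularity lemma and its accompanying counting lemma themselves. Unlike the graph case, one cannot simply regularize $E(H)$ against a vertex partition: the edges must be regularized against carefully constructed partitions of every lower shadow $\partial_j H$ for $2 \le j \le k-1$, and these partitions must themselves be mutually compatible. This forces a delicate iterative construction with tower-type dependence of the parameters, which is exactly what the references \cite{gowers,nrs,rs,rs2,tao} supply.
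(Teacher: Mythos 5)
The paper does not prove Lemma~\ref{le51} at all --- it is quoted as a known black box and attributed to the references \cite{gowers,nrs,rs,rs2,tao}, which supply the hypergraph regularity and counting lemmas and the deduction of removal from them. So there is no ``paper's own proof'' to compare against; your task here was, in effect, to sketch how the cited literature establishes the result.

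Your outline is a correct high-level account of that standard deduction: regularize against a hierarchy of partitions of the $j$-sets for $2\le j\le k-1$, clean by discarding edges that sit on sparse or irregular polyads (or touch leftover classes), and then observe that any surviving copy of $F$ sits entirely on dense regular polyads, so the counting lemma produces $\Omega(n^\ell)$ copies of $F$ in $H$, contradicting the hypothesis once $\delta$ is small. You are also right to flag that the genuine difficulty lies in the two black boxes themselves, not in this bookkeeping. Two small points worth making explicit if you were to flesh this out: (i) a copy of $F$ may place several vertices into the same part $V_i$, so the counting-lemma step needs either a preliminary random refinement of the partition into at least $\ell$ sub-parts per class, or a version of the counting lemma that handles repeated parts; and (ii) the numerical choice $\delta < c(\eta,F)/(2\,\ell!)$ is more conservative than necessary --- since $\binom{n}{\ell}\approx n^\ell/\ell!$, any $\delta$ with $\delta\binom{n}{\ell} < c(\eta,F)\,n^\ell$ suffices, which permits $\delta$ up to roughly $c(\eta,F)\cdot \ell!$ --- but taking $\delta$ smaller than needed of course does no harm.
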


\section{Szemer\'edi's $r_k(n)=o(n)$ by Frankl and R\"odl}\label{sec:FR}

Recall that $I_k(i)$ denotes the $k$-uniform hypergraph consisting of two hyperedges sharing exactly $i$ vertices.
Frankl and R\"odl~\cite{fr} generalized the lower bound of the celebrated $(6,3)$-theorem (i.e.,~\eqref{eq23}) of Ruzsa and Szemer\'edi~\cite{rsz} as follows.

\begin{theorem}[\cite{fr}]\label{th:fr} For 
any integer $k\geq 3$  
there exists a $c_k'>0$ such that for all $n\geq k$
\begin{equation*} 
    c_k'\times r_k(n)\times  n^{k-2}\leq \ex_k(n,\{Q_k(k),I_k(k-1)\}). 
\end{equation*}
\end{theorem}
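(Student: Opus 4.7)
The plan is to construct a $k$-partite $k$-uniform hypergraph $H$ on at most $n$ vertices that is simultaneously $Q_k(k)$-free and $I_k(k-1)$-free and carries $\Omega(r_k(n)\, n^{k-2})$ edges. The guiding idea is an algebraic one: impose two linear constraints modulo a prime $p$ so that $I_k(k-1)$-freeness is enforced by the first constraint, while any copy of $Q_k(k)$ would produce a $k$-term arithmetic progression inside a prescribed residue set $S$, which we block by taking $S$ to be AP$_k$-free.

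Concretely, I would pick a prime $p$ with $p = \Theta(n/k)$ (Bertrand) and an AP$_k$-free set $S \subseteq \mathbb{Z}_p$ with $|S| \geq r_k(p) = \Omega(r_k(n))$ (using that $r_k(m)/m$ is essentially non-increasing). Take $V = V_1 \cup \dots \cup V_k$ with each $V_i$ a disjoint copy of $\mathbb{Z}_p$, and for each $k$-tuple $(x_1,\dots,x_k) \in V_1 \times \dots \times V_k$ satisfying
\[
\sum_{i=1}^{k} x_i \equiv 0 \pmod p
\quad\text{and}\quad
\sum_{i=1}^{k} i\, x_i \in S \pmod p,
\]
include the corresponding transversal as an edge of $H$. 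Viewed as two equations in $(x_{k-1}, x_k)$ with $(x_1,\dots,x_{k-2})$ and $s\in S$ prescribed, the $2\times 2$ coefficient matrix has determinant $k-(k-1)=1$, so each choice of $(x_1,\dots,x_{k-2},s)$ yields exactly one edge. Hence $|E(H)| = p^{k-2}|S| = \Omega(n^{k-2}\, r_k(n))$.

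For $I_k(k-1)$-freeness, two edges sharing $k-1$ vertices would differ in exactly one coordinate; subtracting the first (sum-zero) equation forces those two values to be equal, a contradiction. For $Q_k(k)$-freeness, note that a copy of $Q_k(k)$ in a $k$-partite hypergraph is necessarily aligned with the partition, so it consists of vertices $b_i, c_i \in V_i$ with $c_i \neq b_i$ such that every swap edge $e_i = (b_1,\dots,b_{i-1},c_i,b_{i+1},\dots,b_k)$ lies in $E(H)$. Applying the first equation to $e_i$ gives $c_i = b_i - \sigma_1$ where $\sigma_1 := \sum_j b_j$ is independent of $i$, forcing $\sigma_1 \not\equiv 0 \pmod p$ (since $c_i \neq b_i$). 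Applying the second equation to $e_i$ then yields $\sigma_2 - i\sigma_1 \in S$ for $i=1,\dots,k$, where $\sigma_2 := \sum_j j\, b_j$. These $k$ residues form a genuine $k$-term arithmetic progression in $\mathbb{Z}_p$ with nonzero common difference $-\sigma_1$, and are distinct because $p$ is prime and $p > k$, contradicting AP$_k$-freeness of $S$.

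The main obstacle I expect is calibrating the second linear form so that the arising arithmetic progression has length exactly $k$ (rather than, say, length $3$, which would only deliver the weaker bound $r_3(n)\, n^{k-2}$). The weights $1, 2, \dots, k$ in $\sum i\, x_i$ are chosen precisely so that each of the $k$ swap-edges around a center $(b_1,\dots,b_k)$ contributes a distinct term of a length-$k$ AP in $\mathbb{Z}_p$; this is what allows the construction to absorb the full strength of Szemerédi's theorem at rate $r_k$ rather than merely $r_3$. Once this algebraic setup is in place, everything else is a short linear computation modulo $p$.
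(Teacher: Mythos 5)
Since the paper cites Frankl and R\"odl for Theorem~\ref{th:fr} without reproducing a proof, the relevant comparison here is to the construction that the paper itself gives in Section~\ref{sec:constr} for the weaker claim (\ref{main}.d), which is of exactly the same two-linear-constraint type. Your construction is essentially that construction with the ``$k$-good'' set replaced by an ${\rm AP}_k$-free set, which is the right move for this pair of forbidden hypergraphs: $Q_k(k)$ together with $I_k(k-1)$ has enough structure that the swap-edges generate a genuine length-$k$ AP, whereas for $Q_k(r)$ with small $r$ one only controls pairs of swaps and is stuck with the weaker $k$-good/Behrend sets. The verification of $I_k(k-1)$-freeness, the determinant-one edge count $p^{k-2}|S|$, and the reduction of a $Q_k(k)$ to the progression $\sigma_2-i\sigma_1\in S$ ($i=1,\dots,k$) with nonzero difference $-\sigma_1$ are all correct, and the claim that a $Q_k(k)$ in a $k$-partite hypergraph must be aligned with the partition is true (each edge $e_i$ misses only $b_i$ from $B$, and the part left uncovered by $B\setminus\{b_i\}$ is the same one for every $i$, which forces $b_i$ and $c_i$ into the same part).

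The one real gap is the transfer from ${\mathbb Z}$ to ${\mathbb Z}_p$. You write ``an ${\rm AP}_k$-free set $S\subseteq{\mathbb Z}_p$ with $|S|\ge r_k(p)$,'' but ${\rm AP}_k$-freeness \emph{modulo $p$} is strictly stronger than ${\rm AP}_k$-freeness in ${\mathbb Z}$ (the former also forbids wrap-around progressions), so a largest such set may a priori be smaller than $r_k(p)$, and $r_k$ as used in the paper is defined for subsets of $[n]$. The standard fix is to take $S\subseteq\{1,\dots,\lfloor p/2\rfloor\}$ ${\rm AP}_k$-free in ${\mathbb Z}$ with $|S|=r_k(\lfloor p/2\rfloor)$: then any nontrivial mod-$p$ progression $a,a+d,\dots,a+(k-1)d$ lying in $S$ has each consecutive difference $\equiv d\pmod p$ and, as an integer, lying in $(-p/2,p/2)$, hence constant, giving a genuine integer AP in $S$, a contradiction. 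Combined with the (near-)monotonicity of $r_k(m)/m$ this still yields $|S|=\Omega(r_k(n))$. You should state this step explicitly; as written, the final ``contradicting ${\rm AP}_k$-freeness of $S$'' does not follow from the hypothesis you actually placed on $S$.
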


They conjectured $\ex_k(n,\{Q_k(k),I_k(k-1)\})=o(n^{k-1})$ and proved the case $k=4$ (the case $k=3$ is part of~\eqref{eq23}).
In order to prove $\ex_4(n,\{Q_4(4),I_4(3)\})=o(n^3)$ they developed a hypergraph removal lemma for the 3-uniform case.
They also described how the hypergraph removal lemma (Lemma~\ref{le51}) would imply the general upper bound $o(n^{k-1})$.
Since then Lemma~\ref{le51} has been proved, we have the following result.

\begin{cor}\label{co:fr2} For any $k\geq 2$ we have $\ex_k(n,\{Q_k(k),I_k(k-1)\})=o(n^{k-1})$.
\end{cor}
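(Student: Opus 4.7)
The plan is to follow the route outlined by Frankl and R\"odl in~\cite{fr}: build an auxiliary $(k-1)$-uniform hypergraph whose copies of the complete hypergraph $K_k^{(k-1)}$ correspond to the edges of $H$, and then invoke Lemma~\ref{le51}. Let $H$ be a $k$-uniform, $n$-vertex, $\{Q_k(k),I_k(k-1)\}$-free hypergraph and set $m=|E(H)|$. Define $G:=\partial_{k-1}H$, viewed as a $(k-1)$-uniform hypergraph on $V(H)$. Since $H$ is $I_k(k-1)$-free, every $(k-1)$-set in $E(G)$ is contained in a unique edge of $H$, so $|E(G)|=km$. Moreover, each $e\in E(H)$ contributes a copy of $K_k^{(k-1)}$ in $G$ on the vertex set $e$; these $m$ copies, which I call \emph{canonical}, are pairwise edge-disjoint in $G$, again by $I_k(k-1)$-freeness.

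The structural core is the claim that $G$ contains no further copies of $K_k^{(k-1)}$. Fix such a copy on a $k$-set $U=\{u_1,\dots,u_k\}$, let $T_i:=U\setminus\{u_i\}$, and for each $i$ let $f_i$ be the unique edge of $H$ containing $T_i$, so $f_i=T_i\cup\{v_i\}$. Three cases arise. (i) If every $v_i=u_i$, then $f_i=U$ for all $i$, so $U\in E(H)$ and the copy is canonical. (ii) If $v_i=u_i$ but $v_j\notin U$ for some $i\neq j$, then $f_i=U$ and $f_j$ share $T_j$, forming an $I_k(k-1)$ in $H$, a contradiction. (iii) If every $v_i\notin U$, a short use of $I_k(k-1)$-freeness forces the $v_i$'s (hence the $f_i$'s) to be pairwise distinct, so $\{f_1,\dots,f_k\}$ is a copy of $Q_k(k)$ on $U\cup\{v_1,\dots,v_k\}$ with $B=U$ and $C=\{v_1,\dots,v_k\}$, again a contradiction. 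Hence the number of $K_k^{(k-1)}$-copies in $G$ equals $m$.

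Finally, I apply Lemma~\ref{le51} to $G$ with $F=K_k^{(k-1)}$ (a $(k-1)$-uniform hypergraph on $\ell=k$ vertices). Given $\varepsilon>0$, let $\delta>0$ be the corresponding constant. Since $m\leq \binom{n}{k-1}/k$ and $\binom{n}{k}$ grows faster than $\binom{n}{k-1}$, we have $m\leq \delta\binom{n}{k}$ for all sufficiently large $n$. The lemma then provides a set of at most $\varepsilon\binom{n}{k-1}$ edges whose removal makes $G$ entirely $K_k^{(k-1)}$-free; but each of the $m$ pairwise edge-disjoint canonical copies must contribute at least one edge to this set, forcing $m\leq \varepsilon\binom{n}{k-1}$. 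Since $\varepsilon>0$ is arbitrary, $m=o(n^{k-1})$. The only delicate point in the whole argument is the three-case structural analysis; after that, Lemma~\ref{le51} does all the work.
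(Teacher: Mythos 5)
Your proof is correct and follows the same route as the paper: form the $(k-1)$-shadow, show that every copy of $K_k^{(k-1)}$ in it is canonical and that the canonical copies are pairwise edge-disjoint, then let the hypergraph removal lemma (Lemma~\ref{le51}) finish the job. The only difference is that the paper first applies the Erd\H{o}s--Kleitman reduction~\eqref{eq51} to make $H$ $k$-partite, which streamlines the clique analysis (each $K_k^{(k-1)}$ is automatically a transversal and the extension vertices fall into distinct parts), whereas you dispense with that step at the cost of a slightly longer three-case argument---both are sound.
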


Note that Theorem~\ref{th:fr} and Corollary~\ref{co:fr2} imply Szemer\'edi's theorem: $r_k(n)=o(n)$.

The upper bound in Corollary~\ref{co:fr2} supplies a non-compact pair for $k$-graphs.
The Tur\'an number of $I_k(k-1)$ is $\Theta(n^{k-1})$ (see \eqref{eq52}) and $\ex(n, Q_k(k))={n\choose k-1} +O(n^{k-2})$ (see~\cite{ff87} and~\cite{206}).

Since the above corollary plays such an important role in our main result, we include its few line proof for the sake of completeness.

\bigskip
\noindent
{\em Proof} of Corollary~\ref{co:fr2}. \enskip
Let $H$ be a $Q_k(k)$ and $I_k(k-1)$-free $k$-graph on $n$ vertices.
We will give an upper bound on its size.
By~\eqref{eq51} we may suppose that $H$ is $k$-partite with parts $P_1,\dots,P_k$.
Consider its shadow $\partial H$, it is a $(k-1)$-uniform hypergraph.
Since $H$ is $I_k(k-1)$-free, each $f\in \partial H$ is contained in a unique $e(f)\in E(H)$.
We get $\binom{k}{k-1}|E(H)|=|E(\partial H)|$. This already gives $|E(H)|=O(n^{k-1})$.

Every edge $e\in E(H)$ induces a complete subhypergraph $K_k^{(k-1)}$ in $\partial H$.
We claim that these are the only cliques of size $k$ in $\partial H$.
Consider $K$ a copy of $K_k^{(k-1)}$ in $\partial H$. 
Then $|P_i\cap V(K)|=1$ for each $P_i$.
If $e(f)=V(K)$ for some $f\in E(K)$ then $K$ is the clique generated by $V(K)=e(f)\in E(H)$.
Otherwise, when $e(f)\neq V(K)$ for each $f\in E(K)$,
the $k$ hyperedges $\{ e(f): f\in E(K)\}$ form a copy of $Q_k(k)$. This contradiction implies that
 $\partial H$ is indeed the edge-disjoint union of cliques induced by the edges of $H$, and these are the only $k$-cliques in $\partial H$.

Therefore, the number of copies of $K_k^{(k-1)}$ in $\partial H$ is $O(n^{k-1})=o(n^{|V(K)|})$. Then by the hypergraph removal lemma (Lemma~\ref{le51}) there exists a subhypergraph $H'$, $E(H')\subset E(\partial H)$, so that $E(H')$  meets  every copy of $K_k^{(k-1)}$ in $\partial H$ and $|E(H')|=o(n^{k-1})$.
For such an $H'$ we have $|E(H)|\leq |E(H')|$, finishing the proof.
\qed

\section{Proof of Theorem \ref{main}, upper bounds}\label{sec:upper}

In this section we prove (\ref{main}.a) and (\ref{main}.c), the upper bounds for $\ex_k(n,Q_k(r))$.

Let $H$ be a $Q_k(r)$-free $k$-graph on $n$ vertices.
We will give an upper bound on $|E(H)|$.
By~\eqref{eq51} we may suppose that $H$ is $k$-partite with parts $P_1,\dots,P_k$.
For a hyperedge $e\in E(H)$, let $D(e)\subseteq [k]$ denote the set of integers $i$ such that there is another hyperedge $e'\in E(H)$ that differs from $e$ only in $P_i$, $e\setminus P_i= e'\setminus P_i$.
Note that $|D(e)|<r$ because $H$ is $Q_k(r)$-free.

By the pigeonhole principle there is a set $D\subset \{1,\dots,k\}$ such that there are at least $|E(H)|/2^k$ hyperedges $e\in E(H)$ with $D(e)=D$.
Let $H'$ be the $k$-graph of these edges, $E(H'):= \{ e\in E(H): D(e)=D\}$.
Set $\ell:= k-|D|$, we have $\ell\geq k-r+1$, $\ell\geq 1$.

Let $T$ be an edge of the complete $|D|$-partite hypergraph with parts $\{ P_i: i\in D\}$, i.e.,
  $|T|=|D|$ and $|T\cap P_i|=1$ for each $i\in D$.
($D$ might be the empty set).
There are at most $O(n^{k-\ell})$ appropriate $T$.
Define $H'[T]$ as the {\em link} of $T$ in $H'$, i.e.,
  it is an $\ell$-graph with edges $\{ e\setminus T: T\subset e\in E(H')\}$.

Observe first that $H'[T]$ is $I_\ell(\ell-1)$-free. Indeed, two hyperedges of $H'[T]$ sharing $\ell-1$ vertices would mean two hyperedges in $H'$ sharing $k-1$ vertices such that their only difference is in a part not belonging to $D$.
So every $(\ell-1)$-element set is contained in at most one hyperedge in $H'[T]$, thus $|H'[T]|\le \binom{n}{\ell-1}$.
Since $|E(H')|= \sum_T |E(H'[T])|$, we obtained
\begin{equation}\label{eq71}
|E(H)|=O(|E(H')|)=O(n^{k-l})\binom{n}{\ell-1} =O(n^{k-1}),
\end{equation}
completing the proof of (\ref{main}.a).

Finally, let us assume $k\ge 2r-1$, i.e., $\ell\geq r$.
We claim that in this case $H'[T]$ is also $Q_\ell(\ell)$-free.
Indeed, if we add $T$ to the hyperedges of a copy of $Q_\ell(\ell)$ from $H'[T]$, we obtain a $Q_k(\ell)$ in $H'$.
Since $Q_k(\ell)$ contains a $Q_k(r)$, this is a contradiction.
Thus we have $|E(H'[T])|=o(n^{\ell-1})$ by Corollary~\ref{co:fr2}.
We complete the proof as in~\eqref{eq71}
\[
|E(H)|=O(|E(H')|)=O(n^{k-l})\times o(n^{\ell-1}) =o(n^{k-1}).
  \qed
\]

For the case $k\ge 2r-1$ we actually proved that $\ex_k(n,Q_k(r))\le \ex_k(n,\{Q_k(k),I_k(k-1)\})$. This is $o(n^{k-1})$ by Corollary \ref{co:fr2}. Theorem \ref{th:fr} shows that this way the upper bound cannot be improved significantly, because  $\ex_k(n,\{Q_k(k),I_k(k-1)\})= \Omega( r_k(n)\times  n^{k-2})$. In Section \ref{sec:constr} we will present the slightly weaker lower bound $\Omega( s_k(n)\times  n^{k-2})$ for $\ex_k(n,Q_k(r))$.

\section{Proof of Theorem \ref{main}, the polynomial range}\label{sec:constr1}

In this section we prove the lower bound (\ref{main}.b) by giving a construction.

Since $k\le 2r-2$, we have $r-1\geq k+1-r \geq 1$.
Let $X$ and $Y$ be two disjoint sets,  $|X|=\lfloor n/2\rfloor$ and $|Y|=\lceil n/2\rceil$.
Let $H^1$ be an $(|X|, r-1, r-2)$-packing of maximum size, i.e., an $(r-1)$-uniform hypergraph such that any two hyperedges share at most $r-3$ vertices. By~\eqref{eq52} we have $|E(H^1)|= \Theta(n^{r-2})$.
Let $H^2$ be the complete $(k-r+1)$-uniform hypergraph with vertex set $Y$.
Finally, let $H^3$ be the $k$-graph with vertex set $X\cup Y$ having as hyperedges all the $k$-sets that are unions of a hyperedge of $H^1$ and a hyperedge of $H^2$. Then $H^3$ has $\Theta(n^{k-1})$ hyperedges.
We claim that $H^3$ is $Q_k(r)$-free.

Assume, on the contrary, that there is a copy of $Q_k(r)$ in $H^3$, $E(Q_k(r))=\{ f_1, f_2, \dots, f_r\}$.
Note that $|\cap f_i|=k-r < (k-r+1)\leq r-1$ and the symmetric differences $\{ f_i\bigtriangleup f_j : 1\leq i<j\leq r\}$ are all distinct $4$-element sets.
Consider, first, the case when for some $i\neq j$ we have $f_i\cap X=f_j\cap X$. Then all $f_t\cap X$ are identical.
Indeed, if there exists an $f_t\cap X\neq f_i\cap X$, then these two $(r-1)$-sets have symmetric difference at least 4, so it should by exactly 4, and then $(f_i\cap X)\bigtriangleup (f_t\cap X)$ and  $(f_j\cap X)\bigtriangleup (f_t\cap X)$ are identical 4-element sets, a contradiction. Then $|\cap f_i|\geq r-1$, a contradiction.

From now on, we may suppose that the $(r-1)$-element sets $\{ f_i\cap X\}$ are all distinct.
Then, because  $|(f_i\cap X)\bigtriangleup (f_j\cap X)|\geq 4$ we have that $f_i\cap Y=f_j\cap Y$ for all $1\leq i<j\leq r$.
Hence $|\cap f_i|\geq k-r+1$, a final contradiction. \qed


\section{Proof of Theorem \ref{main}, a non-polynomial lower bound}\label{sec:constr}

In this section we prove the lower bound (\ref{main}.d) by giving a construction.
We will show that if $n=kp$, where $k\geq 5$ and $p$ is a prime, then $\ex(n,Q_k(3))\geq p^{k-2}s_k(p)$. As  $\ex(n,Q_k(3))$ is monotone in $n$ and there is a prime between $n/2k$ and $n/k$, this and~\eqref{eq55} give the desired bound $\Omega(n^{k-1-o(1)})$ for $\ex(n,Q_k(3))$.

Let the vertex set $V$ consist of the pairs $(i,j)$ with $1\le i\le k$ and $0\le j\le p-1$. Choose two integers  $0\leq \alpha,\beta \le p-1$ and a $k$-good set $S\subset \{0,\dots,p-1\}$ of size $s_k(p)$.  Suppose that $m_1,\dots, m_k\in\{1,\dots,k\}$  are distinct integers (i.e., a permutation of $[k]$).
We define a $k$-partite $k$-graph $F=F(S,\alpha,\beta)$ on $V$ with parts $P_i:=\{ (i,j): 0\leq j\leq p-1\}$.  A $k$-set $\{(1,x_1),(2,x_2),\dots, (k,x_k)\}$ is a hyperedge of $F$ if the following two equations hold.
\begin{align*}
\left(\sum_{i=1}^k x_i\right)&=\alpha   \pmod{p}, \\
\left(\sum_{i=1}^km_ix_i\right) &\in S+\beta   \pmod{p}. \end{align*}

We have $|F(S,\alpha,\beta)|=p^{k-2}s_k(p)$. Indeed, for any $s\in S$ we can pick $k-2$ values $x_3, \dots, x_k$ arbitrarily, and since $m_1\neq m_2$, the above two equations uniquely determine $x_1$ and $x_2$.

\begin{clm} $F$ is $Q_k(3)$-free.
\end{clm}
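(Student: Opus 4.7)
The plan is to assume for contradiction that $F$ contains a copy of $Q_k(3)$ with common $(k-3)$-core $A$, triples $B=\{b_1,b_2,b_3\}$, $C=\{c_1,c_2,c_3\}$, and edges $e_t:=A\cup(B\setminus\{b_t\})\cup\{c_t\}$ for $t=1,2,3$, and then extract from this a linear relation that violates $k$-goodness of $S$.

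The first step is to exploit $k$-partiteness to pin down how the six vertices of $B\cup C$ distribute among parts. All three edges agree on $A$, so $A$ occupies the same $(k-3)$-tuple of parts; relabel so these are $P_1,\dots,P_{k-3}$ and the remaining three parts are $P_{k-2},P_{k-1},P_k$. Since $b_j$ and $b_\ell$ co-occur in $e_t$ for $t\notin\{j,\ell\}$, they must lie in different parts, so $b_1,b_2,b_3$ bijectively occupy the three free parts. Inspecting each edge $e_t$ then forces $c_t$ into the unique free part not used by the two $b$'s present in $e_t$, which is exactly the part of $b_t$. After relabeling, I may assume $b_j,c_j\in P_{k-3+j}$ for $j=1,2,3$.

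Next I set up coordinates. Let $y_j$ and $z_j$ be the second coordinates of $b_j$ and $c_j$, respectively; let $x_1,\dots,x_{k-3}$ be the second coordinates of $A$; and set $M_j:=m_{k-3+j}$, three distinct elements of $[k]$. The congruence $\sum x_i\equiv\alpha\pmod p$ applied to $e_1,e_2,e_3$ and differenced pairwise gives $z_j-y_j=d$ independent of $j$, with $d\not\equiv 0\pmod p$ because $b_j\neq c_j$ as vertices. The weighted congruence $\sum m_i x_i\equiv s_t+\beta\pmod p$ (with $s_t\in S$) differenced pairwise yields
\[(M_j-M_\ell)\,d\equiv s_j-s_\ell\pmod p\quad\text{for all }j\neq\ell.\]

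Finally, I eliminate $d$ by cross-multiplying two of these relations, obtaining
\[(M_2-M_3)\,s_1+(M_3-M_1)\,s_2+(M_1-M_2)\,s_3\equiv 0\pmod p.\]
The three coefficients are nonzero (as the $M_j$ are distinct), lie in $\{-k,\dots,-1\}\cup\{1,\dots,k\}$, and sum to zero, so $k$-goodness of $S$ forces $s_1=s_2=s_3$. Substituting back into any difference relation gives $(M_j-M_\ell)\,d\equiv 0\pmod p$, and since $0<|M_j-M_\ell|<p$ this forces $d\equiv 0\pmod p$, contradicting $d\neq 0$. The main (rather modest) obstacle is simply the bookkeeping in the first step: getting the $Q_k(3)$-structure correctly realized in the three-part skeleton so that the $d$-elimination lands on a relation matching the exact $k$-goodness hypothesis with three summands and coefficients summing to zero.
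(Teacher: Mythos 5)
Your proof is correct and follows essentially the same route as the paper: pin down the part structure via $k$-partiteness, difference the two defining congruences across the three edges to get a common shift $d$ (the paper's $u$), eliminate it to land on the relation $(M_2-M_3)s_1+(M_3-M_1)s_2+(M_1-M_2)s_3\equiv 0$, and invoke $k$-goodness. The only cosmetic difference is that you establish $d\neq 0$ up front from $b_j\neq c_j$, whereas the paper derives $u=0$ at the end and then reads off $x_j=y_j$ as the contradiction; you also spell out the WLOG relabeling that the paper states without proof.
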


\begin{proof}[Proof of Claim] Suppose, on the contrary, that there is  a copy of $Q_k(3)$ in $F$, and let $A,B,C$ be the sets of vertices as in Definition~\ref{qkr}. Without loss of generality we may assume that
$A= \{ (i,x_i): 4\leq i \leq k \}$, $b_i=(i,x_i)$ ($i=1,2,3$), and $c_i=(i,y_i)$ ($i=1,2,3$). 
Then the constraints in the definition of $F$ imply the following equations.
\[\left(\sum_{i=1}^kx_i\right)+y_1-x_1=\alpha  \pmod{p}\]

\[\left(\sum_{i=1}^kx_i\right)+y_2-x_2=\alpha  \pmod{p}\]

\[\left(\sum_{i=1}^kx_i\right)+y_3-x_3=\alpha  \pmod{p}\]

\[\left(\sum_{i=1}^km_ix_i\right)+m_1(y_1-x_1)=s_1+\beta \pmod{p} \]

\[\left(\sum_{i=1}^km_ix_i\right)+m_2(y_2-x_2)=s_2+\beta \pmod{p} \]

\[\left(\sum_{i=1}^km_ix_i\right)+m_3(y_3-x_3)=s_3+\beta \pmod{p} \]%
for some $s_1s_2,s_3\in S$. Define $u$ and $v$ as $u:=\alpha-(\sum_{i=1}^k x_i)$ and $v:=(\sum_{i=1}^km_ix_i)-\beta$.
We obtain
\begin{equation}\label{eq91}
   y_j-x_j=u, \pmod{p} \quad \text{ for } j=1,2,3
\end{equation}
and
\begin{equation}\label{eq92}
   v+m_ju=s_j \pmod{p} \quad \text{ for } j=1,2,3.\end{equation}%
These imply
\[  (v+m_1u -s_1) (m_2-m_3)+ (v+m_2u -s_2) (m_3-m_1)+(v+m_3u -s_3) (m_1-m_2)=0.
\]
Rearranging
\[(m_3-m_2)s_1+(m_1-m_3)s_2+(m_2-m_1)s_3=0 \pmod{p}.\]
As $S$ is a $k$-good set and $1\leq |m_i-m_j|\le k$, we have $s_1=s_2=s_3$.
Then~\eqref{eq92} gives $v+m_1u=v+m_2u=v+m_3u$ implying $u=0$. Then~\eqref{eq91}
 gives $x_j=y_j$ (for $j=1,2,3$), a contradiction.
\end{proof}

\section{A lower bound for the case $k=2r-1$}\label{sec:10}

In this section we present a simple construction implying the lower bound in (\ref{main}.d) for the case $k=2r-1$. 
It gives $\ex(n,Q_{2r-1}(r))\geq \Omega(r_r(n)n^{k-2})$, a stronger lower bound than the one in the previous section. 
The construction is similar to the one in Section~\ref{sec:constr1}. 

Start with an $r$-graph $H^1$ with a set $V_1$ of $\lfloor n/2\rfloor$ vertices and $\Omega(r_r(n)n^{r-2})$ hyperedges that is both $Q_r(r)$-free and $I_r(r-1)$-free. The existence of such hypergraphs was proved by Frankl and R\"odl~\cite{fr}, see Theorem \ref{th:fr}. Add a set $V_2$ of $\lceil n/2\rceil$ new vertices and take all $k$-edges containing an $r$-edge of $H^1$ and $r-1$ vertices from $V_2$. This hypergraph $H$ has $\Omega(r_r(n)n^{k-2})$ hyperedges.

We claim that $H$ is $Q_k(r)$-free ($k=2r-1$). 
Suppose, on the contrary, that $H$ contains a copy of $Q_k(r)$, and let $A$, $B=\{ b_1, \dots, b_r\}$, and $C=\{c_1, \dots, c_r \}$ be the sets of vertices as in Definition \ref{qkr}.
Since $|B|, |C|>r-1$ they both share at least one element with $V_1$, say $b_i\in B\cap V_1$ and $c_j\in V_1\cap C$. 
If $c_i$ is not in $V_1$, then $e_i:=A\cup B\setminus\{b_i\}\cup \{c_i\}$ has less elements in $V_1$ than $e_j:=A\cup B\setminus\{b_j\}\cup \{c_j\}$ does. It is a contradiction as both $e_i\cap V_1$ and $e_j\cap V_1$ are hyperedges of $H^1$. 
We obtained that $b_i\in V_1\cap B$ implies $c_i\in V_1\cap C$.

If there exists a $b_t\in V_2\cap B$ then $c_t$ also must belong to $V_2$. Otherwise, $|e_t\cap V_1|> |e_i\cap V_1|$, a contradiction. 
In this case $b_i, c_i \in V_1$ and $b_t, c_t \in V_2$ imply that 
$e_t:=A\cup B\setminus\{b_t\}\cup \{c_t\}$ shares $r-1$ elements with $e_i=A\cup B\setminus\{b_i\}\cup \{c_i\}$ inside $V_1$, which contradicts the $I_r(r-1)$-free property of $H^1$.

Hence we may assume that each $b_t\in B$ belongs to $V_1$. Then $C\subset V_1$, too, so $A\subset V_2$. 
Then the $r$-edges $B\setminus\{b_t\}\cup \{c_t\}$ form a copy of $Q_r(r)$ in $H^1$. This final contradiction completes the proof.


\end{document}